\theoremstyle{plain}
\newtheorem{thm}{Theorem}[section]
\newtheorem{conj}{Conjecture}[section]
\newtheorem{fact}{Fact}[section]
\theoremstyle{definition}
\newtheorem{df}{Definition}[section]
\newtheorem{rem}{Remark}[section]
\newtheorem{ex}{Example}[section]
\newtheorem{prob}{Problem}[section]
\newcommand{\FF}{\mathbb{F}}
\newcommand{\ZZ}{\mathbb{Z}}
\newcommand{\CC}{\mathbb{C}}
\newcommand{\Z}{\mathbb{Z}}
\newcommand{\NN}{\mathbb{N}}
\newcommand{\R}{\mathbb{R}}
\DeclareMathOperator{\Aut}{Aut}
\DeclareMathOperator{\wt}{wt}
\begin{document}

%%%%%%%%%%%%%%%%%%%%%%%%%%%%%%%%%

\title[Tutte polynomial, complete invariant, and theta series]
{Tutte polynomial, complete invariant, and theta series}

\author[Kume]{Misaki Kume}
\address{Faculty of Education, University of the Ryukyus, Okinawa  
903--0213, Japan}
%\email{??}
\author[Miezaki]{Tsuyoshi Miezaki*}
\thanks{*Corresponding author}
\address{Faculty of Education, University of the Ryukyus, Okinawa  
903--0213, Japan}
\email{miezaki@edu.u-ryukyu.ac.jp}
%telephone: +81-98-895-8883, Fax: +81-98-895-8883 
\author[Sakuma]{Tadashi Sakuma}
\address{Faculty of Science, 
Yamagata University,  
Yamagata 990--8560, Japan}
\email{sakuma@sci.kj.yamagata-u.ac.jp}
\author[Shinohara]{Hidehiro Shinohara}
\address{Institute for Excellence in Higher Education, 
Tohoku University,  
Miyagi 980--8576, Japan}
\email{witchy.shino@gmail.com}

\date{July 31, 2020}

\keywords{matroid, Tutte polynomial, 
code, lattice, weight enumerator, theta series}

\subjclass[2010]{Primary 05B35; Secondary 94B05, 94B75}

\date{}
\maketitle

\begin{abstract}
In this study, 
we present two results that relate 
Tutte polynomials.
First, 
we provide new and complete polynomial invariants for 
graphs. 
We note that 
the number of variables of our polynomials is one. 
Second, 
let $L_1$ and $L_2$ be two non-isomorphic lattices. 
We state that $L_1$ and $L_2$ are theta series equivalent 
if those theta series are the same. 
The problem of identifying theta series equivalent lattices 
is discussed in Prof.~Conway's book The Sensual (Quadratic) Form 
with the title ``Can You Hear the Shape of a Lattice?''
In this study, 
we present a method to find theta series equivalent lattices 
using matroids and their Tutte polynomials. 
\end{abstract}

\section{Introduction}
In this study, 
we provide two results that relate the 
Tutte polynomials.

The first result is as follows: 
In \cite{HJ}, de la Harpe and Jones introduced 
graph-invariant polynomials. 
We refer to these polynomials as $n$-state polynomials. 

The main focus of this paper is on 
non-directed graphs. 
Let $G$ be a graph and 
$X$ be a finite set with $n$ elements. 
$W_n=(x_{ij})$ is an $|X|\times |X|$ symmetric matrix 
indexed by the elements of $X$.
We assume that $X=\{1,2,\ldots,n\}$ and 
$x_{ij}$ are variables.
Moreover, let $\sigma:V(G)\rightarrow X$ be a state function. 
Subsequently, the $n$-state polynomials with $n(n+1)/2$ variables 
are defined as follows: 
\begin{df}[\cite{HJ}]
\[
Z_{W_n}(G)=\sum_{\sigma:{\rm state}}\prod_{uv\in E(G)}W_n(\sigma(u),\sigma(v)), 
\]
where $\sigma$ runs over all state functions. 
\end{df}

The special values of these polynomials are 
approximately the same as certain known polynomial invariants for 
graphs. 
For example, 
let 
\[
W_{{\rm Negami}}=
\begin{pmatrix}
x_1+y&y&\cdots &y\\
y&x_1+y&\cdots &y\\
\vdots&\vdots&\ddots &\vdots\\
y&y&\cdots &x_1+y
\end{pmatrix}.
\]
Then, $Z_{W_{\rm Negami}}(G)$ are known as Negami polynomials 
\cite{Negami}. 
In \cite{Oxley}, Oxley demonstrated that 
$Z_{W_{\rm Negami}}(G)$ is essentially equivalent to 
the Tutte polynomials \cite{T1,T2,T3}. 
Therefore, $n$-state polynomials are a generalization of the 
Tutte polynomials. 

Moreover, let 
$W_{{\rm extNegami}}=(x_{ij})$, 
\[
x_{ij}=
\left\{
\begin{array}{l}
x_i+y\ (i=j<n),\\
x_n+y\ (i=j\geq n),\\
y.
\end{array}
\right.
\]
Then, $Z_{W_{\rm extNegami}}(G)$ are known as extended Negami polynomials 
\cite{Negami-Kawagoe}. 

In this study, 
we demonstrate that 
$n$-state polynomials 
are complete invariants for graphs. 
%(Although 
%we believe that many experts have established this fact, 
%we cannot find any paper in which it is mentioned.)

\begin{thm}\label{thm:main}
The $n$-state polynomials 
$\{Z_{W_n}(G)\}_{n=1}^{\infty}$ are complete invariants for graphs. 
\end{thm}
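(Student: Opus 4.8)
The plan is to reinterpret the family $\{Z_{W_n}(G)\}_n$ as the complete record of graph-homomorphism counts from $G$ into all finite simple graphs, and then to show that this record determines $G$ up to isomorphism. Concretely, fix $n$ and substitute for the variables $x_{ij}$ the entries of the adjacency matrix of an arbitrary loopless simple graph $F$ on the vertex set $\{1,\dots,n\}$, i.e.\ $x_{ij}=1$ if $ij\in E(F)$ and $x_{ij}=0$ otherwise (in particular $x_{ii}=0$). By the defining formula, each state function $\sigma$ then contributes $1$ exactly when $\sigma$ is a homomorphism $G\to F$ and $0$ otherwise, so this evaluation of $Z_{W_n}(G)$ equals $\hom(G,F)$. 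Since we assume $Z_{W_n}(G)=Z_{W_n}(H)$ as polynomials for every $n$, all of their evaluations agree; letting $F$ and $n$ vary we obtain $\hom(G,F)=\hom(H,F)$ for every finite simple graph $F$. It therefore suffices to prove that the function $F\mapsto\hom(G,F)$ determines $G$ up to isomorphism.

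For the latter I would argue by extracting surjective homomorphism counts. Every homomorphism $\phi\colon G\to F$ factors through its image, the subgraph $K_\phi\subseteq F$ on $\phi(V(G))$ whose edges are the images of the edges of $G$, and $\phi\colon G\to K_\phi$ is surjective on both vertices and edges. Writing $\operatorname{epi}(G,K)$ for the number of such vertex- and edge-surjective homomorphisms and $\operatorname{sub}(K,F)$ for the number of subgraphs of $F$ isomorphic to $K$, this factorization yields
\[
\hom(G,F)=\sum_{[K]}\operatorname{sub}(K,F)\,\operatorname{epi}(G,K),
\]
the sum ranging over isomorphism classes of simple graphs $K$ (necessarily with at most $|V(G)|$ vertices and $|E(G)|$ edges). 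Ordering these classes by the subgraph relation, the coefficient matrix $\big(\operatorname{sub}(K,F)\big)_{K,F}$ is unitriangular: $\operatorname{sub}(K,F)=0$ unless $K$ embeds in $F$, while $\operatorname{sub}(K,K)=1$ since the only subgraph of $K$ isomorphic to $K$ is $K$ itself. Hence it is invertible, and Möbius inversion recovers every $\operatorname{epi}(G,K)$ from the values $\hom(G,F)$, so $\operatorname{epi}(G,K)=\operatorname{epi}(H,K)$ for all $K$.

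To conclude, I would specialize $K=G$ and $K=H$. Since the identity is a vertex- and edge-surjective endomorphism, $\operatorname{epi}(G,G)\ge 1$, whence $\operatorname{epi}(H,G)=\operatorname{epi}(G,G)\ge 1$ produces a surjective homomorphism $H\to G$; symmetrically there is a surjective homomorphism $G\to H$. Surjectivity on vertices and edges forces $|V(G)|=|V(H)|$ and $|E(G)|=|E(H)|$, and a vertex-surjection between vertex sets of equal finite size is a bijection; a vertex-bijective, edge-surjective homomorphism between graphs with the same number of edges is an isomorphism, so $G\cong H$. I expect the main obstacle to be the combinatorial bookkeeping in the image/epi decomposition and the verification that the resulting linear system is genuinely unitriangular; once that inversion is in place, the homomorphism-count reinterpretation and the final mutual-surjection step are routine.
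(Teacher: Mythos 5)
Your reduction to homomorphism counts, the epi/sub M\"obius inversion, and the mutual-surjection finish constitute a correct Lov\'asz-style (right-profile) argument --- but only for \emph{simple} graphs, and that is where the gap lies. The theorem concerns the class of graphs on which the de la Harpe--Jones polynomials are defined, which includes multigraphs: the paper's own proof explicitly reconstructs vertices ``adjacent with $e_j$ edges,'' i.e., it recovers edge multiplicities. Your very first step discards that information irrecoverably. If $uv$ is an edge of multiplicity $k$ in $G$, a state $\sigma$ contributes the factor $x_{\sigma(u)\sigma(v)}^k$, and after your substitution $x_{ij}\in\{0,1\}$ this equals $x_{\sigma(u)\sigma(v)}$; hence for every loopless simple target $F$ your evaluation satisfies $\hom(G,F)=\hom(\mathrm{simp}(G),F)$, where $\mathrm{simp}(G)$ is the underlying simple graph. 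Concretely, a single edge and a double edge (two parallel edges on the same vertex pair) have different $2$-state polynomials, $x_{11}+2x_{12}+x_{22}$ versus $x_{11}^2+2x_{12}^2+x_{22}^2$, yet all of your evaluations coincide on these two graphs, so nothing downstream of the substitution can tell them apart. (The same substitution, with $x_{ii}=0$, also annihilates every graph containing a loop, should loops be admitted.) So as written your argument proves completeness only on simple graphs, which is strictly weaker than the statement as the paper interprets it.

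The repair is to stop throwing away the polynomial structure. Either work with the coefficients of $Z_{W_n}(G)$ directly --- this is what the paper does: taking $n=|V|$ (recovered from $Z_{W_2}$ evaluated at $1$), it locates a term produced by an injective state function, and the monomial $x_{i_{11}i_{12}}^{e_1}\cdots x_{i_{m1}i_{m2}}^{e_m}$ then displays the adjacencies, the multiplicities $e_j$, and the isolated vertices as the unused indices --- or run your homomorphism argument with weighted targets, substituting indeterminates or generic reals for the $x_{ij}$ rather than $\{0,1\}$, so that multiplicities survive as exponents. With either repair the multigraph case goes through; note also that your route, once fixed, is genuinely different from (and less direct than) the paper's, which never needs the epi/sub inversion because the maximal-index terms of a single polynomial already exhibit the graph.
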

According to the proof of Theorem \ref{thm:main}, 
for $n\geq 2$, 
a graph $G$ with $n$ vertices 
is determined by the 
$n$-state polynomials, which have $n(n+1)/2$ variables. 
In this study, 
we consider the following problems: 

\begin{prob}\label{prob:inv}

%\begin{enumerate}

%\item [(1)]

%Is there a polynomial invariant such that 
%the polynomials determine any graph $G$ with $n$ vertices 
%and the number of the variables less that $n(n+1)/2$. 

%\item [(2)]

Let $\mathcal{G}$ be a set of certain graphs. 
Is there a polynomial invariant 
(hopefully equivalent to the one from the $n$-state polynomial)
%(hopefully as a generalization  
%of the $n$-state polynomial) 
such that 
the polynomials determine any graph $G\in \mathcal{G}$ with $n$ vertices 
and a number of variables less than $n(n+1)/2$? 

%\end{enumerate}
\end{prob}

%\begin{rem}
%In the present paper, 
%a complete invariant for graphs
%mean for for graphs $G_1$ and $G_2$, 
%if we know the numbers of the vetrices of 
%$G_1$ and $G_2$, 
%then we can determine using such invariant whether 
%$G_1$ and $G_2$ are isomorphic or not. 
%\end{rem}
The first purpose of this study is 
to introduce polynomial invariants for graphs 
and to provide an answer to Problem \ref{prob:inv}.
The number of variables of our polynomials is one. 
Furthermore, for any given graph $G$, if the degree of our polynomial 
is sufficiently large, then the set of terms of our polynomial is 
one-to-one corresponding to the set of terms of the $n$-state 
polynomial with the same degree. 
We refer to such polynomials as pseudo $n$-state polynomials, 
denoted by $Z_{\widetilde{W}_n}(G)$, which are 
explained in the following. 
%and 
%$n$-sliced extended Negami polynomials 
%$Z_{\widehat{W}_n}$. 

%Let $G$ be a graph with $n$ vertices and $m$ edges. 
%Let $P(\ell)$ be a function on $\NN$ such that 
%$P(\ell)=$the $\ell$-th prime number. 
For $\ell\in\NN$, we denote the $\ell$-th prime number as $P(\ell)$. 
We set the functions $a(i)$ on $\NN$ and $b(i,j)$ on $\NN^2$ such that 
\[
%\left\{
%\begin{array}{l}
%\displaystyle a(i) := 
%\frac{i(i + 1)}{2},\\
\displaystyle  a(i,j) := 
\left(\frac{i(i + 1)}{2}+(j-i)\right). 
%\end{array}
%\right.
\]
Let $\widetilde{W}_n=(\widetilde{W}_n(i,j))$
be the $n\times n$ symmetric matrix such that, 
for $i\geq j$, 
\[
\widetilde{W}_n(i,j) := 
\left\{
\begin{array}{l}
 P(n^{n a(i,i)}) 
x^{P(n^{n a(i,i)})}\ (\mbox{if }i=j),\\
 P(n^{n a(i,j)}) 
x^{P(n^{n a(i,j)})}\ (\mbox{if }i> j).
\end{array}
\right.
\]
Next, we present the definition of the pseudo $n$-state polynomials: 
\begin{df}

Let $\sigma:V(G)\rightarrow X$ be a state function. 
Then, the pseudo $n$-state polynomials %with $n(n+1)/2$ variables 
are defined as follows: 
\[
Z_{\widetilde{W}_n}(G)
=\sum_{\sigma:{\rm state}}
\prod_{uv\in E(G)}\widetilde{W}_n(\sigma(u),\sigma(v)), 
\]
where $\sigma$ runs over all state functions. 
\end{df}
We note that 
the number of variables of our polynomials is one. 
For example, 
for $n=2$,
\[
\widetilde{W}_2=
\begin{pmatrix}
7 x^{7}&53 x^{53}\\
53 x^{53}&311 x^{311}
%103 x^{103}&251 x^{251}\\
%251 x^{251}&419 x^{419}
\end{pmatrix}
\]
and for $n=3$,
\[
\widetilde{W}_3
=
\begin{pmatrix}
 103 x^{103} & 5519 x^{5519} & 7867547 x^{7867547} \\
 5519 x^{5519} & 220861 x^{220861} & 262960091 x^{262960091} \\
 7867547 x^{7867547} & 262960091 x^{262960091} & 8448283757 x^{8448283757} 
% 103 x^{103} & 5519 x^{5519} & 220861 x^{220861} \\
% 5519 x^{5519} & 220861 x^{220861} & 7867547 x^{7867547} \\
% 220861 x^{220861} & 7867547 x^{7867547} & 8448283757 x^{8448283757} \\
%567871 x^{567871}& 1205557 x^{1205557}&1870213 x^{1870213}\\
%1205557 x^{1205557}& 1870213 x^{1870213}& 2552507 x^{2552507}\\
%2552507 x^{2552507}& 3245569 x^{3245569}& 3950183 x^{3950183}
\end{pmatrix}.
\]
Then, the 
pseudo $2$-state polynomial for the complete graph $K_2$ is 
\[
Z_{\widetilde{W}_2}(K_2)=
311 x^{311}+106 x^{53}+7 x^7
%103 x^{103} + 502 x^{251} + 419 x^{419} 
\]
and 
the pseudo $3$-state polynomial for the complete graph $K_3$ is 
\begin{align*}
Z_{\widetilde{W}_3}(K_3)&=
1752546015417169494746495151 x^{8974203939}\\
&+1568803100908626481902639 x^{8464018851}\\
&+602983567540694711837927399093 x^{25344851271}\\
&+45816295551192560609823 x^{526141043}\\
&+68507927876961253578 x^{270833157}\\
&+19126573401337581 x^{15735197}\\
&+10773507110137381 x^{662583}+20181854789463 x^{231899}\\
&+9411942549 x^{11141}+1092727 x^{309}. 
%1568803100908626481902639 x^{8464018851}\\
%&+1236311233642394708991 x^{8448725479}\\
%&+602983567540694711837927399093 x^{25344851271}\\
%&+41012758524202130847 x^{15955955}+57540022142879238 x^{8093927}\\
%&+10773507110137381 x^{662583}+15072890628189 x^{441825}\\
%&+20181854789463 x^{231899}+9411942549 x^{11141}+1092727 x^{309}.
%&183125604866117311 x^{1703613} + 2475976073852039637 x^{2978985} + \\
%& 8154321388144569111 x^{4281327} + 1986240503059759399 x^{4308297} + \\
%& 2710863836228043161 x^{4990591} + 6541437773629753597 x^{5610639} + \\
%& 11510011512514535974 x^{5628277} + 3699845374841760679 x^{5672885} + \\
%& 7854555859619717293 x^{6310571} + 7317616867389646729 x^{6321339} + \\
%& 12184983769234445437 x^{6975227} + 19974482347550942062 x^{7003633} + \\
%& 15493475859980195879 x^{7668289} + 13816541906697312527 x^{7690609} + \\
%& 19700296592095561093 x^{8361351} + 18857114100179090353 x^{8372903} + \\
%& 51473191278753627934 x^{9055197} + 32724649520137091389 x^{9748259} + \\
%& 41610114298774428263 x^{10441321} + 61638441169350778487 x^{11850549}.
\end{align*}
The first main result in this paper is as follows: 
\begin{thm}\label{thm:main2}
The pseudo $n$-state polynomial 
$\{Z_{\widetilde{W}_n}\}_{n=1}^\infty$ is a complete invariant 
for graphs. 
%Moreover, 
%pseudo $n$-state polynomials 
%determine any graph $G\in \mathcal{G}$ with $n$ vertices 
%and the number of the variables is $\lfloor(n(n+1)/2)^{1/2}\rfloor$. 
%
\end{thm}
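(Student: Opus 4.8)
The plan is to show that the single pseudo $n$-state polynomial $Z_{\widetilde{W}_n}(G)$ already determines the multivariate $n$-state polynomial $Z_{W_n}(G)$ for every graph $G$ on $n$ vertices, and then to invoke Theorem~\ref{thm:main}. First I would record the structure of $Z_{W_n}(G)$: since a state $\sigma$ assigns one variable $x_{\sigma(u)\sigma(v)}$ to each of the $E:=|E(G)|$ edges, every monomial appearing in $Z_{W_n}(G)$ has the form $\prod_{i\ge j} x_{ij}^{m_{ij}}$ with $\sum_{i\ge j} m_{ij}=E$; in particular $Z_{W_n}(G)$ is homogeneous of degree $E$, and the coefficient of such a monomial is the number $A_{(m_{ij})}$ of states realizing the multiplicities $(m_{ij})$. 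The pseudo polynomial is then the image of $Z_{W_n}(G)$ under the ring homomorphism $\varphi_n$ determined by $x_{ij}\mapsto P_{ij}\,x^{P_{ij}}$, where $P_{ij}:=P(n^{n a(i,j)})$. I would observe that $a$ restricts to a bijection from $\{(i,j):i\ge j\}$ onto $\{1,2,\dots,N\}$ with $N=n(n+1)/2$, so the $P_{ij}$ are $N$ pairwise distinct primes.

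The heart of the argument is a separation lemma: among exponent vectors of a common degree $E\le\binom{n}{2}$, the linear form $(m_{ij})\mapsto\sum_{i\ge j} m_{ij}P_{ij}$ is injective. I would order the primes as $P_1<P_2<\cdots<P_N$ and prove the super-increasing estimate $P_k>E\cdot P_{k-1}$ for every $k$. Because consecutive values of $a$ differ by $1$, the ratio of indices is $n^{na_k}/n^{n(a_k-1)}=n^n$, and combining the elementary bounds $P(m)>m$ and $P(m)<Cm\log m$ shows that $P_k/P_{k-1}$ is of order $n^n$, which dominates $\binom{n}{2}$ for all $n$ (the finitely many small $n$ being checked directly). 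Granting $P_k>E\,P_{k-1}$, a greedy/carry argument gives injectivity: if two degree-$E$ vectors $(m_{ij})\ne(m'_{ij})$ had equal image, then at the largest index $k$ where they differ (say $m_k>m'_k$) one would get $P_k\le\sum_{l<k}m'_l P_l\le E\,P_{k-1}$, a contradiction.

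With the separation lemma in hand, the univariate monomials $x^e$ occurring in $Z_{\widetilde{W}_n}(G)$ are in bijection with the multivariate monomials of $Z_{W_n}(G)$: distinct $(m_{ij})$ of the common degree $E$ produce distinct exponents $e=\sum m_{ij}P_{ij}$, so no two multivariate terms collapse onto the same power of $x$. Moreover each such exponent decodes uniquely and effectively by greedy division (set $m_N=\lfloor e/P_N\rfloor$, subtract $m_N P_N$, and recurse, which is valid precisely because $\sum_{l<k}m_l P_l<P_k$), after which the coefficient $c_e=A_{(m_{ij})}\prod_{i\ge j}P_{ij}^{m_{ij}}$ yields $A_{(m_{ij})}=c_e\big/\prod_{i\ge j}P_{ij}^{m_{ij}}$. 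Thus $Z_{W_n}(G)$ is reconstructed from $Z_{\widetilde{W}_n}(G)$. Finally, if $Z_{\widetilde{W}_n}(G)=Z_{\widetilde{W}_n}(G')$ for all $n$, the reconstruction gives $Z_{W_n}(G)=Z_{W_n}(G')$ for all $n$, whence $G\cong G'$ by Theorem~\ref{thm:main}; the converse is immediate, so the family is a complete invariant. The main obstacle is the separation lemma, i.e.\ making the prime-counting estimate $P_k/P_{k-1}>\binom{n}{2}$ rigorous and uniform in $n$; this is exactly the role of the factor $n$ in the exponent $n\,a(i,j)$, which inflates the growth rate from $n$ to $n^n$ and thereby beats the largest possible edge multiplicity.
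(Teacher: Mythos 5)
Your decoding mechanism is exactly the engine of the paper's own proof: entries of the form $P_{ij}x^{P_{ij}}$ with super-increasing primes force each multivariate monomial to leave a uniquely decodable fingerprint in the exponent, after which the coefficient returns the number of states. The gap lies in which member of the family you choose to decode, and it matters because the theorem is not restricted to simple graphs: the paper's proofs of both Theorem \ref{thm:main} and Theorem \ref{thm:main2} explicitly reconstruct pairs of vertices ``adjacent with $e_j$ edges,'' so multigraphs are in scope and the number of edges $E$ is \emph{not} bounded by $\binom{n}{2}$ when $n=|V(G)|$. Your separation lemma, which you call the heart of the argument, genuinely fails in that regime: for $n=2$ the three primes are $7,53,311$, and $258\cdot 7+46\cdot 311=304\cdot 53=16112$ with $258+0+46=0+304+0=304$, so distinct exponent vectors of common degree $304$ collide. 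Nothing in the estimate $P_k>E\,P_{k-1}$ survives once $E$ may be comparable to the primes themselves; your closing remark that $n^n$ ``beats the largest possible edge multiplicity'' is precisely where an unstated simple-graph assumption is hiding.

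Second, even for simple graphs the final step is not justified as written: your reconstruction gives $Z_{W_n}(G)=Z_{W_n}(G')$ only for those $n$ with $\binom{n}{2}\geq E$, yet you then cite Theorem \ref{thm:main} as a black box, whose hypothesis is equality of $Z_{W_n}$ for \emph{all} $n$ (its proof begins by reading $|V|$ off $Z_{W_2}$). You would need to re-enter that proof instead: from a single reconstructed $Z_{W_n}$ with $n$ large, the all-ones evaluation $n^{|V|}$ yields $|V|$, and the maximal-distinct-index argument then recovers the graph. Both gaps are repairable, but the repair essentially lands on the paper's strategy, which ties the matrix size to the number of edges rather than vertices: read $m=|E|$ off $Z_{\widetilde{W}_1}(G;x\to 1)=2^m$, then decode the single polynomial $Z_{\widetilde{W}_{3m}}$, whose consecutive entry primes grow by a factor of roughly $(3m)^{3m}>m$, so degree-$m$ exponent sums decode uniquely no matter how the $m$ edges are distributed among parallel classes; the number of isolated vertices is recovered at the end from the relevant coefficient, which equals a power of $3m$ times the order of the automorphism group of the graph with its isolated vertices removed.
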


In the following, 
we present the second result of the study. 
Let $L_1$ and $L_2$ be two non-isomorphic lattices. 
We state that $L_1$ and $L_2$ are theta series equivalent 
if those theta series are the same:
\[
\theta_{L_1}(q)=\theta_{L_2}(q). 
\]
The problem of determining theta series equivalent lattices 
is discussed in Prof.~Conway's book \cite{Conway} 
under the title ``Can You Hear the Shape of a Lattice?''
\begin{prob}[{\cite[Can You Hear the Shape of a Lattice?]{Conway}}]\label{prob:theta}

Finding theta series equivalent lattices. 

\end{prob}
For example, it is well known, as per the example of Milnor, that 
$E_8^2$ and $D_{16}^+$ are theta series equivalent 
\cite{{Conway},{SPLAG}}, and several examples are provided 
in \cite{Conway}. 

In this study, 
we present a method to find theta series equivalent lattices 
using matroids and their Tutte polynomials. 
The second main result is as follows:
\begin{thm}\label{thm:main3}
Let $d\in 
\{24, 27, 30, 33, 35, 36, 38, 39, 41, 42\}\cup \{i\in \ZZ\mid i\geq 44\}$. 
Then, there exist non-isomorphic lattices of rank $4d$
with the same theta series. 
\end{thm}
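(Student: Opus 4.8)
The plan is to exploit the chain of correspondences suggested by the keywords of the paper---matroid, Tutte polynomial, code, weight enumerator, lattice, theta series---and to separate the statement into a \emph{formal} part (equal theta series) and a \emph{rigidity} part (non-isomorphism). For the formal part I would proceed as follows. Fix a finite field $\FF_q$ and take a pair of non-isomorphic, $\FF_q$-representable matroids $M_1,M_2$ with $T(M_1)=T(M_2)$. Passing to associated linear codes $C_1,C_2$, the classical theorem of Greene expresses the Hamming weight enumerator of a code as a specialization of the Tutte polynomial of its matroid; hence $C_1$ and $C_2$ share the same weight enumerator. Applying Construction A yields lattices $L_1,L_2$ of rank equal to the length of the codes, and the theta series of a Construction-A lattice is obtained from its weight enumerator by substituting Jacobi theta functions. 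Consequently $\theta_{L_1}(q)=\theta_{L_2}(q)$ is forced by $T(M_1)=T(M_2)$, and the theta-equivalence half of the theorem costs nothing beyond producing the matroid pair.

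To hit precisely the prescribed ranks I would first record the arithmetic of the index set. Writing $\la 3,11\ra$ for the numerical semigroup generated by $3$ and $11$, whose Frobenius number is $3\cdot 11-3-11=19$, its elements below $20$ are exactly $0,3,6,9,11,12,14,15,17,18$, and all integers $\ge 20$ belong to it. Adding the offset $24$ reproduces the set $\{24,27,30,33,35,36,38,39,41,42\}\cup\{i\ge 44\}$ of the statement, so the theorem asks exactly for $d\in 24+\la 3,11\ra$. This factorization dictates the construction: I would build one base instance realizing $d=24$ together with two augmentation blocks that raise $d$ by $3$ and by $11$. Since the Tutte polynomial is multiplicative under direct sum of matroids, attaching a fixed block to both $M_1$ and $M_2$ preserves $T(M_1)=T(M_2)$, hence the equality of weight enumerators and of theta series, while increasing the rank additively; iterating the two augmentations then realizes every $d\in 24+\la 3,11\ra$.

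The genuine difficulty is the rigidity part: certifying that the two lattices obtained at each admissible rank are not isometric. Equality of theta series discards exactly the information needed here, so non-isomorphism must be detected by a finer invariant that survives Construction A---for example the root sublattice, the isometry type of the lattice generated by minimal vectors, or an invariant of $L_i$ that recovers the equivalence class of $C_i$ rather than only its weight enumerator---and $M_1,M_2$ must be chosen so that this invariant separates $L_1$ from $L_2$. To make the augmentation step compatible with non-isomorphism I would take the base lattices indecomposable and invoke the uniqueness of orthogonal decomposition into indecomposable sublattices: direct-summing a common augmentation lattice to two non-isometric indecomposable lattices cannot make them isometric. The load-bearing work is therefore the explicit exhibition of a non-isomorphic, equal-Tutte base pair at $d=24$ together with the two augmentation blocks raising $d$ by $3$ and by $11$, and the verification that the base lattices are indecomposable and distinguished by the chosen invariant; I expect this, rather than any theta-series computation, to be the main obstacle.
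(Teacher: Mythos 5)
Your proposal gets the two easy thirds of the argument right and matches the paper there: the chain Tutte polynomial $\to$ weight enumerator (Greene) $\to$ theta series (Construction A) is exactly the paper's ``formal part,'' and your numerical-semigroup analysis of the index set is precisely the paper's arithmetic --- the paper uses the Bollob\'as--Pebody--Riordan graphs $G_i(m,n)$ (path-joins plus edge subdivisions of a fixed Tutte-equivalent non-isomorphic pair), which have $3m+11n+24$ edges, realizing $d\in 24+\la 3,11\ra$ exactly as you computed. But the part you explicitly defer --- certifying non-isometry of the resulting lattices --- is the actual content of the paper's proof, and your proposal contains no mechanism for it. The paper's mechanism is concrete and is also the answer to a question your proposal silently skips: why the rank is $4d$ rather than $d$. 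The paper composes each code $C_{M_i}$ with the repetition map $\varphi:0\mapsto 0000,\ 1\mapsto 1111$, producing doubly even codes $\widetilde{C}_{M_i}$ of length $4(3m+11n+24)$; for doubly even codes one can invoke the results of Kitazume--Kondo--Miyamoto and Shimakura to transfer non-equivalence of codes into non-isomorphism of the Construction A lattices. Your candidate substitutes (root sublattices, minimal vectors, ``an invariant recovering the code'') are only named, never shown to separate anything; and as written your construction produces lattices of rank $d$, not $4d$, so it does not even have the right ranks unless you add some analogue of the quadrupling step --- which is precisely the step that makes the rigidity argument available.

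On the other hand, your augmentation scheme is a genuinely different (and workable) way to organize the arithmetic: attaching common direct-sum blocks of sizes $3$ and $11$ and invoking unique orthogonal decomposition of positive definite lattices (Eichler) to cancel the common summands would indeed propagate non-isometry from a single base pair to all admissible ranks, whereas the paper instead relies on BPR's theorem that the whole two-parameter family $G_1(m,n),G_2(m,n)$ stays Tutte-equivalent and non-isomorphic (subdivision and join are not direct sums, so multiplicativity alone would not give this). Your route would reduce the theorem to one base case; the cost is that the base case is exactly the open end of your proposal, and without the doubly-even trick plus the cited code-to-lattice rigidity results (or a worked-out substitute invariant), the proof does not close.
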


\begin{rem}
Prior to concluding this section, 
we provide a remark. 
The relationships and analogies among 
codes, lattices, and vertex operator algebras 
are well known in the algebraic combinatorics community. 
However, 
the proof of Theorem \ref{thm:main3} 
uses the relationships among 
matroids, codes, and lattices. 
We remark that there may be a rich theory 
behind the matroids and the three objects 
codes, lattices, and vertex operator algebras. 
\end{rem}

%This gives rise to a natural question: is there
%a generalization of the Tutte polynomial {which 
%identifies such $T$-equivalent but inequivalent matroids?}
%This paper aims to provide a candidate 
%generalization that answers this. 

The remainder of this paper is organized as follows: 
In Section \ref{sec:pre}, 
we summarize basic facts of matroids, codes, and lattices. 
In Section \ref{sec:main+2},
% and \ref{sec:main2}, 
we provide the proofs of Theorems \ref{thm:main} and~\ref{thm:main2}. 
%, 
%respectively. 
In Section \ref{sec:main3}, 
we provide the proof of Theorem \ref{thm:main3}. 
Finally, in Section~\ref{sec:rem}, we present several remarks.

All computer calculations in this study
were performed with the aid of {\sc Magma}~\cite{Magma}
and {\sc Mathematica}~\cite{Mathematica}.

\section{Preliminaries}\label{sec:pre}

In this section, 
we summarize basic facts of matroids, codes, and lattices. 

\subsection{Matroids}

Let $E$ be a set. A matroid $M$ on $E=E(M)$ is a pair 
$(E,\mathcal{I})$, where $\mathcal{I}$ is a non-empty 
family of subsets of $E$ with the following properties: 
\[
\begin{cases}
\mbox{(i)} &\mbox{ if } I\in \mathcal{I} \mbox{ and } J\subset I, \mbox{ then } 
J\in \mathcal{I}; \\
\mbox{(ii)} &\mbox{ if } I_1,I_2\in \mathcal{I} \mbox{ and }|I_1|<|I_2|, \\
&\mbox{ then there exists }
e\in I_2\setminus I_1 \\
&\mbox{ such that } I_1\cup \{e\}\in \mathcal{I}. 
\end{cases}
\]
Each element of the set $\mathcal{I}$ is known as an independent set. 
A matroid $(E, \mathcal{I})$ is isomorphic to another matroid 
$(E', \mathcal{I}')$ 
if there exists a bijection $\varphi$ from $E$ to $E'$ such that 
$\varphi(I)\in \mathcal{I}'$ holds for each member $I\in \mathcal{I}$, 
and  
$\varphi^{-1}(I')\in \mathcal{I}$ holds for each member $I'\in \mathcal{I}'$. 

It follows from the second axiom that all maximal independent sets 
in a matroid $M$ take the same cardinality, 
known as the rank of $M$. These maximal independent sets 
$\mathcal{B}(M)$ are referred to as 
the bases of $M$. 
The rank $\rho(A)$ of an arbitrary subset $A$ of $E$ is the cardinality of the largest independent set contained in $A$. 

%A \textit{circuit} in a matroid $M$ is a minimal dependent subset of 
%$E$. 
%%, that is, a dependent set whose proper subsets are all independent. 
%A \textit{flat} in a matroid $M = (E, \mathcal{I})$ is a subset $F$ of $E$ 
%with the property that $\rho(F \cup \{x\}) = \rho F +1$ for all $x \in  
%E\setminus F$. 
%If $\rho F = k$ and $A$ is an independent subset of $F$ of
%cardinality $k$, then $F = \{x \in  E \mid  \rho (A\cup \{x\}) = \rho A\}$. 
%A flat whose rank is one less than that of $E$ is called a \textit{hyperplane}.

%For a matroid $M$, there exists a unique matroid $M^{*}$ such that 
%$E(M^{*})=E(M)$, 
%and that a set $S\subset E^{*}$ is independent if and only if 
%there exists a basis $B\in \mathcal{B}(M)$ which satisfies 
%$S\cap B=\emptyset$. 
%Such matroid $M^{*}$ is  called the \textit{dual matroid} of $M$. 
%It is well-known that $(M^{*})^{*}=M$, and 
%$T(M^{*} ;x,y)=T( M; y, x)$ hold. 
%A matroid is \textit{self-dual} if $M^{*}$ is isomorphic to $M$.

We provide examples below. 
\begin{ex}
Let $A$ be a $k\times n$ matrix over a finite field $\FF_q$. 
%Assume that the rows of $A$ are linearly independent. 
This results in a matroid $M$ on the set 
\[
E={\{z\in\ZZ \mid 1\leq z\leq n\}},
\] 
in which the set $\mathcal{I}$ is independent if and only if the family of columns of $A$ with indices belonging to $\mathcal{I}$ is linearly independent. 
Such a matroid is called a vector matroid. 
\end{ex}
\begin{ex}
%Let
%J be the set of all subsets A of E for which the graph (V,A) is acyclic (that
%is, a forest). Then (E, J ) is a matroid. Such a matroid is called graphic, and
%is denoted by M(G).

Let $G=(V,E)$ be a non-directed finite graph, 
where $V$ is the vertex set of $G$ and 
$E$ is the edge set of $G$. 
Let $\mathcal{I}$ be the set of all subsets $A$ of $E$, 
such that the graph $(V,A)$ is acyclic. 
Then, $(E, \mathcal{I})$ is a matroid. 
Such a matroid is known as graphic and
is denoted by $M(G)$. 

\end{ex}

The following fact is used in the proof of 
Theorem \ref{thm:main3}: 
\begin{fact}\label{fact:matroids}
%We note that 
The incidence matrix of $M(G)$ provides the 
vector matroid over $\FF_2$, which is isomorphic to 
$M(G)$ as matroids. 
\end{fact}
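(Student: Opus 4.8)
The plan is to show that the graphic matroid $M(G)$ and the vector matroid arising from the incidence matrix of $G$ over $\FF_2$ share the same ground set $E(G)$ and have identical families of independent sets, so that the identity map on $E(G)$ is the desired isomorphism. First I would fix notation: write $A$ for the $|V(G)|\times|E(G)|$ incidence matrix over $\FF_2$, whose column indexed by an edge $e=\{u,v\}$ has entry $1$ in the rows for $u$ and $v$ and $0$ elsewhere; let $N(G)$ denote the vector matroid on $E(G)$ in which a subset $S\subseteq E(G)$ is independent precisely when the corresponding columns of $A$ are linearly independent over $\FF_2$.

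The heart of the argument is the following equivalence, which I would isolate as the key step: for $S\subseteq E(G)$, the columns $\{a_e : e\in S\}$ are linearly dependent over $\FF_2$ if and only if $S$ contains a cycle of $G$. To translate, observe that a relation $\sum_{e\in T}a_e=\0$ with $\emptyset\neq T\subseteq S$ holds over $\FF_2$ exactly when every vertex of $G$ is incident to an even number of edges of $T$; that is, exactly when $(V(G),T)$ is a nonempty subgraph in which every vertex has even degree. Thus linear dependence of $S$ is equivalent to the existence of a nonempty even subgraph supported on $S$.

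It then remains to match even subgraphs with cycles. In one direction, any cycle $C\subseteq S$ is itself a nonempty even subgraph (each of its vertices has degree two), giving a dependence. In the other direction, a nonempty even subgraph $T$ has at least one edge, and every vertex meeting $T$ has even, hence positive, hence at least $2$, degree within $T$; a graph in which every vertex incident to an edge has degree at least $2$ must contain a cycle, since a forest always has a leaf (a vertex of degree $1$). Hence $T$, and therefore $S$, contains a cycle. Combining both directions yields the key equivalence, which says precisely that $S$ is independent in $N(G)$ if and only if $S$ is acyclic, i.e.\ if and only if $S$ is independent in $M(G)$. Since the two matroids have the same ground set and the same independent sets, the identity map on $E(G)$ is a matroid isomorphism.

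I do not anticipate a serious obstacle, as this is a standard structural fact; the only point demanding care is the purely graph-theoretic lemma that a nonempty even subgraph must contain a cycle, together with checking that the argument is unaffected by loops (whose $\FF_2$-columns vanish, matching their status as one-element circuits) and parallel edges (whose columns coincide, matching their status as parallel elements).
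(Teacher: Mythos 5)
Your proof is correct. Note that the paper itself offers no proof of this statement at all: it is recorded as a ``Fact'' (a standard result in matroid theory, essentially that graphic matroids are binary, representable by the vertex--edge incidence matrix) and is simply invoked later in the proof of Theorem \ref{thm:main3}. Your argument supplies the standard proof of this standard fact: an $\FF_2$-linear dependence among incidence columns is exactly a nonempty even subgraph, and a nonempty even subgraph must contain a cycle since a nonempty forest has a vertex of degree one. The two directions together show the independent sets coincide, so the identity on $E(G)$ is an isomorphism. Your closing attention to loops (zero columns, matching one-element circuits) and parallel edges (equal columns, matching two-element circuits) is exactly the right care to take, since the graphs $G_i(m,n)$ used in the paper are multigraph constructions; nothing is missing.
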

%\begin{ex}
%Let 
%\[
%{E=\{z\in\ZZ \mid 1\leq z\leq n\}}
%\] and $r$ a natural number. 
%We define a matroid on $E$ by taking every $r$-element subset of 
%$E$ to be a basis. 
%This is known as the uniform matroid $U_{r,n}$. 
%\end{ex}

The classification of matroids is 
one of the most important problems in the theory of matroids. 
The Tutte polynomials are a tool for classifying the matroids. 
Let $M$ be a matroid on the set $E$ with a rank function $\rho$. 
The Tutte polynomial of $M$ is defined as follows \cite{{T1},{T2},{T3}}: 
\begin{align*}
T(M)&:=T(M;x,y)\\
&:=\sum_{A\subset E}(x-1)^{\rho (E)-\rho (A)}(y-1)^{|A|-\rho (A)}. 
\end{align*}

%For example, the Tutte polynomial of the uniform matroid $U_{r,n}$ is
%\begin{align*}
%T&(U_{r,n};x,y) \\
%&=
%\sum_{i=0}^r
%\binom{n}{i}(x-1)^{r-i} +
%\sum_{i=r+1}^n
%\binom{n}{i}(y-1)^{i-r}.
%\end{align*}
It can easily be demonstrated that $T(M;x,y)$ is a matroid invariant{.} %and if
{Two matroids are $T$-equivalent if their Tutte polynomials
  are equivalent.}
It is well known that there exist two inequivalent matroids, 
which are $T$-equivalent 
(for example, see \cite[p.~269]{Welsh} or 
Section \ref{sec:main3}), 
and these examples are 
key facts for the proof of Theorem \ref{thm:main3}.

\subsection{Codes}\label{sec:codes}

%Let $C$ be a code over $\FF_2$. 
%Then the lattice $L_C$ is obtained using Construction A. 

Let $\FF_2$ be the finite field of two elements. 
A linear code $C$ with length $n$ is a linear subspace of $\FF_{2}^{n}$. 
%{
An inner product $({x},{y})$ on $\FF_2^n$ is given 
by
\[
(x,y)=\sum_{i=1}^nx_iy_i,
\]
where $x,y\in \FF_q^n$ with $x=(x_1,x_2,\ldots, x_n)$ and 
$y=(y_1,y_2,\ldots, y_n)$. 
The dual of the linear code $C$ is defined as follows: 
\[
C^{\perp}=\{{y}\in \FF_{2}^{n}\mid ({x},{y}) =0 \text{ for all }{x}\in C\}.
\]

A linear code $C$ is known as self-dual if $C=C^{\perp}$. 
For $x \in\FF_2^n$,
the weight $\wt(x)$ is the number of its nonzero components. 
A self-dual code $C$ is doubly even if all codewords of $C$ have a weight that is divisible by four. 

Let $C$ be a linear code of length $n$. 
%and let $f\in\Harm_{k}$. 
The weight enumerator associated with $C$ is
\[
w_{C}(x,y)=\sum_{c\in C}x^{n-\wt(c)}y^{\wt(c)}.
\]
%\end{def}
For example, 
let $C$ be a doubly even self-dual code. 
Then, 
\[
w_{C}(x,y)\in\CC[P_8,P_{24}], 
\]
where
$P_{8}=x^8+14x^4y^4+y^8,
P_{24}=x^4y^4(x^4-y^4)^4$ \cite{{SPLAG},{Ebeling}}. 

\subsection{Lattices}\label{sec:lattices}

%\subsection{Lattices and spherical $t$-designs}
A lattice in $\R^{n}$ is a subgroup $L \subset \R^{n}$ 
with the property that there exists a basis 
$\{e_{1}, \ldots, e_{n}\}$ of $\R^{n}$ 
such that $L =\Z e_{1}\oplus \cdots \oplus\Z e_{n}$.
$n$ is known as the rank of $L$. 
The dual lattice of $L$ is the lattice
\[
L^{\sharp}:=\{y\in \R^{n}\mid (y,x) \in\Z , \ \forall x\in L\}, 
\]
where $(x,y)$ is the standard inner product. 
A lattice $L$ is integral if 
$(x,y) \in\Z$ for all $x$, $y\in L$. 
An integral lattice $L$ is referred to as even if $(x,x) \in 2\Z$ for all $x\in L$. 
An integral lattice $L$ is referred to as unimodular if $L^{\sharp}=L$.

%Let $\Lambda$ be an $n$-dimensional (Euclidean) lattice 
%in $\RR^{n}$. 
%The dual lattice $\Lambda^*$ of $\Lambda$ is the lattice
%$
%\{x\in \RR^{n} \mid (x,y) \in\ZZ \text{ for all }
%y\in \Lambda\}
%$, 
%where $(x, y)$ is the standard inner product.
%A lattice $\Lambda$ with $\Lambda=\Lambda^{*}$
%is called {\em unimodular}.
The norm of a vector $x$ is defined as $(x, x)$.
A unimodular lattice with even norms is said to be 
even. 
An $n$-dimensional even unimodular lattice exists if and only
if $n \equiv 0 \pmod 8$. 
For example, 
the unique even unimodular lattice 
of rank $8$, namely $E_8$, exists, and 
only two even unimodular lattices 
of rank $16$ exist, namely $E_8^2$ and $D_{16}^+$, 
which we mentioned as Milnor's example of 
Problem \ref{prob:theta}. 
Moreover, the unique even unimodular lattice without roots 
of rank $24$ is the Leech lattice $\Lambda_{24}$.

Let $\mathbb{H} :=\{z\in\CC\mid {\rm Im}(z) >0\}$ be the upper half-plane. 
\begin{df}
For an integral lattice in $\R^{n}$, 
the function 
on $\mathbb{H}$ defined by
\[
\theta _{L} (q):=\sum_{x\in L}
e^{\pi iz(x,x)}=\sum_{x\in L}
q^{(x,x)}
\]
is known as the theta series of $L$, 
where $q=e^{\pi iz}$. 
\end{df}
For example, we consider an even unimodular lattice $L$. 
Then, the theta series $\theta_{L}(q)$ of $L$ 
is a modular form with respect to $SL_{2}(\Z)$. 
More precisely, 
we have 
\[
\theta_{L}\in
%\begin{cases}
\CC[E_4,\Delta]%& \text{if $j$ is even},\\
%E_6\CC[E_4,\Delta]& \text{if $j$ is odd}, 
%\end{cases}
\]
where 
\begin{align*}
E_4(q)&:=1+240\sum_{n=1}^{\infty}\sigma_{3}(n)q^{2n}, \\
E_6(q)&:=1-504\sum_{n=1}^{\infty}\sigma_{5}(n)q^{2n}, \\
\Delta(q)&:=\frac{E_4(q)^3-E_6(q)^2}{1728}.
%\\
%&=q^2-24q^4+\cdots, 
\end{align*}
where $\sigma_{k-1}(n):=\sum_{d\mid n}d^{k-1}$ \cite{{SPLAG},{Ebeling}}.

%In general, we have the following: 
%\begin{lem}[{\cite{Ebeling}}]\label{rem:Ebeling}
%Let $L\subset \RR^n$ be an even unimodular lattice of of 
%rank $n = 8N$ and
%of minimum norm $2M$.
%Let 
%\begin{align*}
%E_4(z)&:=1+240\sum_{n=1}^{\infty}\sigma_{3}(n)q^{2n}, \\
%E_6(z)&:=1-504\sum_{n=1}^{\infty}\sigma_{5}(n)q^{2n}, \\
%\Delta(z)&:=\frac{E_4(z)^3-E_6(z)^2}{1728}\\
%&=q^2-24q^4+\cdots, 
%\end{align*}
%where $\sigma_{k-1}(n):=\sum_{d\mid n}d^{k-1}$, and $q=e^{\pi iz}$. 
%Then we have 
%for $P\in\Harm_{2j} (\RR_n)$, 
%\[
%\theta_{L,P}\in
%\begin{cases}
%\CC[E_4,\Delta]& \text{if $j$ is even},\\
%E_6\CC[E_4,\Delta]& \text{if $j$ is odd}. 
%\end{cases}
%\]
%\end{lem}

\section{Proofs of Theorems \ref{thm:main} and \ref{thm:main2}}
\label{sec:main+2}

\subsection{Proof of Theorem \ref{thm:main}}\label{sec:main}

In this section, 
we prove Theorem \ref{thm:main}.
\begin{proof}[Proof of Theorem \ref{thm:main}]
We recover the graph structure of $G=(V,E)$ from the polynomials 
$\{Z_{W_n}(G)\}_{n=1}^{\infty}$. 

First, we compute the number of the vertices $|V|$. 
According to \cite[Corollary 2.2]{Negami}, it is possible to compute 
the number of vertices: 
% (say $n$): 
\[
|V|=\log_2(Z_{W_2}(G;x_{11}\rightarrow 1,x_{12}\rightarrow 1,x_{22}\rightarrow 1)). 
\]
We denote the number of vertices as $n$. 

Second, we demonstrate that $Z_{W_n}$ determines the 
%Assume that $n:=|V|$. 
%We show that $Z_{W_n}$ determines the 
graph structure.
The reason is as follows: 
We seek a term 
\[
(\mbox{constant})\times
x_{i_{11}i_{12}}^{e_1}
x_{i_{21}i_{22}}^{e_2}
\cdots
x_{i_{m1}i_{m2}}^{e_m}
\]
in $Z_{W_n}(G)$ such that 
\[
\sharp\{i_{11},i_{12},i_{21},i_{22},\ldots,i_{m1},i_{m2}\} 
\]
is the maximum among all of the terms. 
It can easily be observed that 
this term is expressed by a bijective state function $\sigma$. 
If
\[
\{1,2,\ldots,n\}=\{i_{11},i_{12},i_{21},i_{22},\ldots,i_{m1},i_{m2}\}, 
\]
$G$ is a graph with $n$ vertices
\[
\{1,2,\ldots,n\},
\]
and 
$i_{j1}$ and $i_{j2}$ are adjacent with $e_j$ edges. 

If
\[
\sharp \{1,2,\ldots,n\}>\sharp \{i_{11},i_{12},i_{21},i_{22},\ldots,i_{m1},i_{m2}\}
\]
and 
assuming that 
\[
\{j_1,\ldots,j_\ell\}:=\{1,2,\ldots,n\}\setminus \{i_{11},i_{12},i_{21},i_{22},\ldots,i_{m1},i_{m2}\},
\]
$G$ is a graph with $n$ vertices
\[
\{1,2,\ldots,n\}
\]
and $\ell$ isolated vertices 
\[
\{j_1,\ldots,j_\ell\},
\]
such that 
$i_{j1}$ and $i_{j2}$ are adjacent with $e_j$ edges. 
This completes the proof of Theorem \ref{thm:main}. 
\end{proof}

\subsection{Proof of Theorem \ref{thm:main2}}\label{sec:main2}

In this section, 
we prove Theorem \ref{thm:main2}.
\begin{proof}[Proof of Theorem \ref{thm:main2}]
We recover the graph structure from the polynomials 
$\{Z_{\widetilde{W}_n}(G)\}_{n=1}^{\infty}$.

%Assume that $n:=|V|$. 
First, 
we compute the number of edges $|E|$. 
%By \cite[Corollary 2.2]{Negami}, 
It is possible to compute 
the number of edges as follows: 
%(say $m$): 
\[
|E|=\log_2 (Z_{\widetilde{W}_1}(G;x\rightarrow 1)). 
\]
We denote the number of edges of $G$ as $m$. 

%Second, we compute the number of the vertices $|V|$. 
%Let 
%\[
%Z_{\widetilde{W}_j}(G))=\sum_{i=2}^{e(i)}c(i)x^{d(i)}
%\]
%For all $\{2,\ldots,e(i)\}$, 
%we compute the prime factrization of 
%$d(i)$ and we correct the corresponding indices 
%of $\widetilde{W}$ (say $X(j)$). 
%Then we seek the $j'\in \NN$ such that 
%\[
%X(j')=X'(j'+1),
%\]
%and we have $|V|=j'$. 

%Finally, we show that $Z_{W_{n+m}}$ determines the 
%graph structure.
We demonstrate that $Z_{\widetilde{W}_{3m}}$ determines the 
graph structure.
The reason is as follows: 
%We recall that 
%\[
%Z_{\widetilde{W}_n}(G)
%=\sum_{\sigma:{\rm state}}
%\prod_{uv\in E(G)}\widetilde{W}_n(\sigma(u),\sigma(v))
%\]
%and remark that, 
%for $\sigma_1,\sigma_2\not\in \Aut(G)$, 
%if $\sigma_1\neq \sigma_2$, 
%\[
%\widetilde{W}_n(\sigma_1(u),\sigma_1(v))
%\neq \widetilde{W}_n(\sigma_2(u),\sigma_2(v)),
%\]
%because of the definitions of 
%$\widetilde{W}_{3m}(i,j)$. 
Let 
\[
Z_{\widetilde{W}_{3m}}(G)=\sum_{i=1}^{\ell_G}c(i)x^{i}. 
\]
%Let 
%\[
%c'(i)=p_{c'(i)_{1}}^{\ell_{1}}\cdots p_{c'(i)_{k_i}}^{\ell_{k_i}}
%\]
%be the prime factrization of $c'(i)$. 
%It is easy to see that 
%\[
%\sum_{i=1}^e\sum_{j=1}^{k_i}\ell_{j}
%\]
%is the number of vertices $|V|$ (say $n$). 

For all $i\in \{1,\ldots,\ell_G\}$, 
we compute the prime factorization of 
$c(i)$ and seek the corresponding indices 
of $\widetilde{W}_{3m}$ (say, $I(c(i))$). 
For example, we recall that 
\[
\widetilde{W}_3
=
\begin{pmatrix}
 103 x^{103} & 5519 x^{5519} & 7867547 x^{7867547} \\
 5519 x^{5519} & 220861 x^{220861} & 262960091 x^{262960091} \\
 7867547 x^{7867547} & 262960091 x^{262960091} & 8448283757 x^{8448283757} 
% 103 x^{103} & 5519 x^{5519} & 220861 x^{220861} \\
% 5519 x^{5519} & 220861 x^{220861} & 7867547 x^{7867547} \\
% 220861 x^{220861} & 7867547 x^{7867547} & 8448283757 x^{8448283757} \\
%567871 x^{567871}& 1205557 x^{1205557}&1870213 x^{1870213}\\
%1205557 x^{1205557}& 1870213 x^{1870213}& 2552507 x^{2552507}\\
%2552507 x^{2552507}& 3245569 x^{3245569}& 3950183 x^{3950183}
\end{pmatrix}.
\]
Then, 
the pseudo $3$-state polynomial for the complete graph $K_2$ is 
\begin{align*}
Z_{\widetilde{W}_3}(K_2)&=
8448283757 x^{8448283757}+525920182 x^{262960091}\\
&+15735094 x^{7867547}+220861 x^{220861}+11038 x^{5519}+103 x^{103}. 
%8448283757 x^{8448283757}+15735094 x^{7867547}\\
%&+662583 x^{220861}+11038 x^{5519}+103 x^{103}.
\end{align*}
Because of $103 =1\times 103$,
the term 
$103x^{103}$ 
yields 
\[
I(103)=\{\{1,1\}\}, 
\]
and 
because of $11038=2\times 5519$,
the term 
$11038 x^{5519}$ 
yields 
\[
I(11038)=\{\{1,2\}\}.
\]
Let 
\[
\widetilde{I}(c(i))=\{i\in \NN\mid i\in I, I\in I(c(i))
\setminus\{\{i,i\}\in \NN^2\}\}
\]
and $j$ be the index such that
$\sharp \widetilde{I}(c(j))$ is the maximum for all 
\[
\{
\widetilde{I}(c(i))\mid i\in \{1,\ldots,\ell_G\}
\}. 
\]
%We denote by $j$ the fact that 
%$\sharp \widetilde{I}(c(j))$ is the maximum for all 
%\[
%\{
%\widetilde{I}(c(i))\mid i\in \{1,\ldots,\ell_G\}
%\}. 
%\]
We set $n':=\sharp \widetilde{I}(c(j))$. 
Thereafter, we recover the edges of $G$ as follows: 
We use $G'$ to denote the subgraph of $G$ except for 
all isolated vertices. 

Clearly we have that $|V(G')| \leq 2|E(G')| \leq 2m$. 
Let $p(i,j)x^{p(i,j)}$ denote the $(i,j)$-entry of 
$\widetilde{W}_{3m}$. 
Then we have 
\[
mp(i,i) < p(i+1,1) \textrm{~and~} mp(i,j-1) < p(i,j)
\]
for every pair $(i,j)$ with $i \geq j$. 
Hence each term of $Z_{\widetilde{W}_{3m}}$ is one-to-one
corresponding to a unique state upto automorphism of $G^{\prime}$.

%Clearly we have that $|V(G')| \leq 2|E(G')|$. 
%Let $p(i,j)x^{p(i,j)}$ denote the $(i,j)$-entry of 
%$\widetilde{W}_{3m}$. 
%Then we have 
%\[
%mp(i,j)<p(i,j+1)
%\]
%for every $i\leq j$, $j<n$, 
%\[
%mP(i,j)<P(i+1,j)
%\]
%for every $i\leq j$, $i<n$. 
%Hence each term of $Z_{\widetilde{W}_{3m}}$ is 
%corresponding to a unique state upto automorphism of $G^{\prime}$.

$G'$ is a graph with $n'$ vertices
\[
\{1,2,\ldots,n'\}
\]
and for 
\[
I(c(j))=\{\underbrace{\{i_1,i_2\},\ldots,\{i_1,i_2\}}_{e_{1,2}},\ldots\}, 
\]
$i_{1}$ and $i_{2}$ are adjacent with $e_{1,2}$ edges. 

Finally, we recover the number of isolated vertices. 
It can easily be observed that 
\[
c(j)=(3m)^{|V(G)|-|V(G^{\prime})|}\times |\Aut(G^{\prime})|. 
\]
%\[
%c(j)=(3m)^{|V(G)|-|V(G')|}\times |\Aut(G)|. 
%\]
Subsequently, we recover the number of isolated vertices: $|V(G)|-|V(G')|$. 

This completes the proof of Theorem \ref{thm:main2}. 
%It is easy to see that 
%this term is given by a bijective state function $\sigma$. 
%If 
%%\[
%%X(d'(i))=\{1,2,\ldots,n\}.
%%\]
%\[
%n=\sharp X(c'(j)). 
%\]
%then $G$ is the graph with $n$ vertices
%\[
%\{1,2,\ldots,n\}
%\]
%and for 
%\[
%X(c'(j))=\{\underbrace{\{i_1,i_2\},\ldots,\{i_1,i_2\}}_{e_i},\ldots\}
%\]
%$i_{1}$ and $i_{2}$ are adjacent with $e_i$ edges. 
%If
%\[
%n>\sharp X(c'(j)). 
%\]
%and 
%assume that 
%\[
%\{j_1,\ldots,j_\ell\}:=\{1,2,\ldots,n\}\setminus \widetilde{X}(c'(j))
%\]
%Then $G$ is the graph with $n$ vertices
%\[
%\{1,2,\ldots,n\}
%\]
%and $\ell$ isolated vertices 
%\[
%\{j_1,\ldots,j_\ell\}
%\]
%such that 
%for 
%\[
%X(c'(j))=\{\underbrace{\{i_1,i_2\},\ldots,\{i_1,i_2\}}_{e_i},\ldots\}
%\]
%$i_{1}$ and $i_{2}$ are adjacent with $e_i$ edges. 
%$i_{i1}$ and $i_{i2}$ are adjacent with $e_i$ edges. 
\end{proof}

\section{Proof of Theorem \ref{thm:main3}}\label{sec:main3}
In this section, we present the proof of Theorem \ref{thm:main3}. 
Prior to this, 
in Section \ref{sec:MC}, 
we provide a relationship between matroids and codes, 
and in Section \ref{sec:CL}, 
we present a relationship between codes and lattices. 
\subsection{Relationship between matroids and codes}\label{sec:MC}

In \cite{Greene}, 
a relationship between 
the weight enumerators of codes and 
the Tutte polynomials of matroids was presented. 
%In this remark, to explain the term genus, 
%we review this relationship.

Let $M$ be a vector matroid obtained from the $k\times n$ matrix $A$. 
Then, the row space of $A$ is a code over $\FF_2$ of length $n$. 
%, 
%namely 
%a $k$-dimentional subspace of of $\FF_q^n$. 
We denote such a code as $C_M$. 
%The weight enumerator $w_C(x,y)$ of the code $C$ is the homogeneous polynomial
%\[
%w_C(x,y) = 
%\sum_{c\in C}
%x^{n-\wt(c)} y^{wt(c)} =
%\sum_{i=0}^n A_i x^{n-i}y^i,
%\]
%where $A_i=\sharp\{i\mid c=(c_1,\ldots,c_n)\in C,c_i\neq 0\}$. 
%$A_i$ is the number of words of weight $i$ in $C$.
The Tutte polynomial of a vector matroid $M$ 
and the weight enumerator of $C_M$ exhibit the following relation:
\begin{thm}[\cite{Greene}]\label{thm:Greene}
Let $M$ be a vector matroid on a set $E=\{1,\ldots,n\}$ over $\FF_2$. 
Then,
\[
w_{C_M}(x_1,x_2)=
x_2^{n-\dim(C_M)}(x_1-x_2)^{\dim(C_{M})}
T\left(M; \frac{x_1+x_2}{x_1-x_2},\frac{x_1}{x_2}\right). 
\]
\end{thm}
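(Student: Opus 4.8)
The plan is to prove the identity by expanding both sides into sums over subsets $S\subseteq E$ and matching them term by term. Since the Tutte polynomial is a reparametrization of the Whitney rank generating function $R(M;u,v):=\sum_{S\subseteq E}u^{\rho(E)-\rho(S)}v^{|S|-\rho(S)}$ through the well-known relation $T(M;x,y)=R(M;x-1,y-1)$, the right-hand side becomes a single subset sum once the prefactor $x_2^{n-\dim(C_M)}(x_1-x_2)^{\dim(C_M)}$ is distributed across the terms. Evaluating $x-1=2x_2/(x_1-x_2)$ and $y-1=(x_1-x_2)/x_2$ at the prescribed arguments and collecting powers, I expect the right-hand side to reduce to $\sum_{S\subseteq E}2^{\rho(E)-\rho(S)}(x_1-x_2)^{|S|}x_2^{n-|S|}$; the base $2$ in the exponential factor is exactly where the hypothesis that the ground field is $\FF_2$ enters.

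First I would expand the weight enumerator into a subset sum. Writing each monomial as $x_1^{n-\wt(c)}x_2^{\wt(c)}=x_2^{n}\prod_{i:\,c_i=0}(x_1/x_2)$, substituting $x_1/x_2=1+(x_1-x_2)/x_2$, expanding the product over the coordinates, and summing over $c\in C_M$ yields
\[
w_{C_M}(x_1,x_2)=\sum_{T\subseteq E}\#\{c\in C_M:\supp(c)\subseteq T\}\,(x_1-x_2)^{n-|T|}x_2^{|T|}.
\]
The decisive structural step is to identify the inner count with a matroid rank. The codewords supported in $T$ form a subspace $C_M(T)$, and the restriction map sending $C_M$ to the coordinates outside $T$ has kernel $C_M(T)$ and image equal to the row space of the submatrix $A_{E\setminus T}$, whose dimension is $\rho(E\setminus T)$. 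Hence $\dim C_M(T)=\dim(C_M)-\rho(E\setminus T)=\rho(E)-\rho(E\setminus T)$, and since the field is $\FF_2$ we get $\#\{c:\supp(c)\subseteq T\}=2^{\rho(E)-\rho(E\setminus T)}$.

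Substituting this count and reindexing by the complement $S=E\setminus T$ turns the left-hand side into $\sum_{S\subseteq E}2^{\rho(E)-\rho(S)}(x_1-x_2)^{|S|}x_2^{n-|S|}$, which is precisely the reduced form of the right-hand side obtained in the first paragraph; comparing the two finishes the argument. I expect the main obstacle to be the rank identity $\dim C_M(T)=\rho(E)-\rho(E\setminus T)$, since it requires relating the dimension of the shortened code to the matroid rank of the complementary set of columns, with some care when $A$ fails to have full row rank (there one must use $\dim(C_M)=\rho(E)$ rather than the number of rows). The remaining work, namely tracking the exponents of $x_2$ and $(x_1-x_2)$ through the substitution, is routine bookkeeping, but it must be carried out carefully so that the powers align and the factor $2^{\rho(E)-\rho(S)}$ appears with base exactly $2$.
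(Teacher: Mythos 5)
Your proof is correct. Note first that the paper itself contains no proof of this statement: it is quoted as Theorem 3.1 with a citation to Greene's 1976 paper, and is used as a black box in the proof of Theorem 1.3. So the comparison here is really with Greene's original argument rather than anything internal to the paper. Your route is the standard direct one: expand $T$ via the Whitney rank generating function at the substituted arguments $x-1 = 2x_2/(x_1-x_2)$, $y-1=(x_1-x_2)/x_2$, expand $w_{C_M}$ coordinatewise into a sum over subsets, and match the two via the key identity $\#\{c\in C_M : \supp(c)\subseteq T\} = 2^{\rho(E)-\rho(E\setminus T)}$, which your kernel--image argument for the restriction map establishes correctly (including the point that $\dim(C_M)=\rho(E)$ holds even when $A$ lacks full row rank, since row rank equals column rank). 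The exponent bookkeeping also checks out: both sides reduce to $\sum_{S\subseteq E}2^{\rho(E)-\rho(S)}(x_1-x_2)^{|S|}x_2^{n-|S|}$. One thing your approach buys, which is worth recording, is that nothing in the argument is specific to $\FF_2$ beyond the base of the exponential: replacing $2$ by $q$ throughout gives Greene's theorem for codes over $\FF_q$, with the same proof. For the purposes of this paper, however, the binary case is exactly what is needed in Section 4, where the theorem converts the $T$-equivalent, non-isomorphic graphic matroids of Bollob\'as--Pebody--Riordan into binary codes with equal weight enumerators.
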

%A generalization of the weight enumerator is known as 
%the weight enumerator of genus $g$: 
%\[
%w_C^{(g)}(x_a:a\in \FF_2^g)=\sum_{v_1,\ldots,v_g\in C}
%\prod_{a\in \FF_2^g}x_a^{n_a(v_1,\ldots,v_g)}, 
%\]
%where $n_a(v_1,\ldots,v_g)$ denotes the number 
%of $i$ such that $a=(v_{1i},\ldots,v_{gi})$. 
%This gives rise to a natural question: is there
%a generalization of the Tutte polynomial that relates 
%the complete weight enumerator $w_C^{(g)}(x_a:a\in \FF_2^g)$?
%We believe that the Tutte polynomials of genus $g$ is 
%a candidate generalization that answers this. 
%Therefore, we use the term ``genus". 
%So far, we do not have any relations between 
%the complete weight enumerator of genus $g$ and 
%the Tutte polynomials of genus $g$. 

\subsection{Relationship between codes and lattices}\label{sec:CL}

We propose a method to construct 
lattices from codes over $\FF_2$, which 
is referred to as Construction A~\cite{{BDHO},{HMV}}. 
Let $\rho$ be a map from $\ZZ^n$ to $\FF_2^n$ 
sending $(x_i)$ 
to $(x_i\pmod{2})$. 
If $C$ is a $\FF_{2}$ code of length $n$, 
we have an $n$-dimensional unimodular lattice 
\[
L_C=\frac{1}{\sqrt{2}}\{x\in \ZZ^n \mid \rho (x)\in C\}. 
\]
The following is an established fact \cite{Ebeling}:
\begin{thm}[\cite{Ebeling}]\label{thm:Ebeling}
Let $C$ be a code and $L_C$ be a lattice obtained from $C$ by 
Construction A. 
Then,
\[
w_{C}(\theta_3,\theta_2)=
\theta_{L_C}(q), 
\]
where 
\[
\theta_3=\sum_{n\in \ZZ}q^{n^2}, 
\theta_2=\sum_{n\in \ZZ+1/2}q^{n^2}.
\]
\end{thm}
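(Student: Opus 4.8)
The plan is to compute $\theta_{L_C}(q)$ directly from its definition and to show that, summed codeword by codeword, it factorizes coordinatewise into exactly the monomials appearing in $w_C(\theta_3,\theta_2)$. Recall $L_C=\tfrac{1}{\sqrt{2}}\{x\in\ZZ^n\mid\rho(x)\in C\}$, so every $y\in L_C$ has the form $y=\tfrac{1}{\sqrt{2}}x$ with $x\in\ZZ^n$ and $\rho(x)\in C$, and its norm is $(y,y)=\tfrac12\sum_{i=1}^n x_i^2$. Substituting this into $\theta_{L_C}(q)=\sum_{y\in L_C}q^{(y,y)}$ gives
\[
\theta_{L_C}(q)=\sum_{\substack{x\in\ZZ^n\\ \rho(x)\in C}}q^{\frac12\sum_{i=1}^n x_i^2}.
\]

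First I would partition this sum according to the codeword $c=\rho(x)\in C$ to which $x$ reduces. The constraint $\rho(x)=c$ is coordinatewise independent: it requires $x_i$ to be even when $c_i=0$ and odd when $c_i=1$, with no coupling between distinct coordinates. Since $q^{\frac12\sum_i x_i^2}=\prod_{i}q^{x_i^2/2}$, the inner sum over all $x$ with $\rho(x)=c$ factors as an $n$-fold product, so that
\[
\theta_{L_C}(q)=\sum_{c\in C}\ \prod_{i=1}^n S_{c_i}(q),\qquad S_0(q)=\sum_{k\in 2\ZZ}q^{k^2/2},\quad S_1(q)=\sum_{k\in 2\ZZ+1}q^{k^2/2}.
\]

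Next I would identify the two one-variable generating functions with the Jacobi theta constants: writing $k=2m$ in $S_0$ and $k=2m+1$ in $S_1$ yields $S_0(q)=\sum_{m\in\ZZ}q^{2m^2}$ and $S_1(q)=\sum_{m\in\ZZ}q^{2(m+1/2)^2}$, which are $\theta_3$ and $\theta_2$ under the normalization fixed by the definitions in Section~\ref{sec:lattices}. In the product $\prod_i S_{c_i}$ the factor $\theta_3$ then occurs once for each coordinate with $c_i=0$ and $\theta_2$ once for each coordinate with $c_i=1$; there are $n-\wt(c)$ of the former and $\wt(c)$ of the latter. Hence $\prod_{i=1}^n S_{c_i}(q)=\theta_3^{\,n-\wt(c)}\theta_2^{\,\wt(c)}$, and summing over $c\in C$ reproduces $\sum_{c\in C}\theta_3^{\,n-\wt(c)}\theta_2^{\,\wt(c)}=w_C(\theta_3,\theta_2)$, which is the claimed identity.

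The hard part is not the algebra but the two bookkeeping points that make the factorization legitimate. First, I must justify interchanging the infinite sum over $x\in\ZZ^n$ with the product over coordinates; this is valid because for $z$ in the upper half-plane $\mathbb{H}$ one has $|q|<1$, so every series in sight converges absolutely and the rearrangement into an $n$-fold product is permitted. Second, I must pin down the normalization so that $S_0$ and $S_1$ match the precise $\theta_3$ and $\theta_2$ of the paper, carefully tracking the factor $\tfrac12$ that originates from the $\tfrac1{\sqrt{2}}$ scaling in Construction~A; this is the only place where a convention-dependent factor can enter, and checking the identity on the rank-one code $C=\FF_2$, where $L_C=\tfrac1{\sqrt{2}}\ZZ$, gives a convenient consistency test.
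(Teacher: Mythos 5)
The paper itself gives no proof of this theorem --- it is quoted as an established fact from Ebeling --- so supplying the direct computation is the right move, and the skeleton of your argument (split the sum over $L_C$ according to the codeword $c=\rho(x)$, factor coordinatewise using absolute convergence for $|q|<1$, and count $n-\wt(c)$ even-coordinate factors against $\wt(c)$ odd ones) is exactly the standard proof. The partition and factorization steps, and the convergence justification, are all correct.

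The gap is precisely the step you deferred to the end, and it is not a removable formality: the identification of $S_0,S_1$ with the paper's $\theta_3,\theta_2$ fails under the paper's literal definitions. You correctly compute $S_0(q)=\sum_{m\in\ZZ}q^{2m^2}$ and $S_1(q)=\sum_{m\in\ZZ}q^{2(m+1/2)^2}$, but the paper defines $\theta_3=\sum_{n\in\ZZ}q^{n^2}$ and $\theta_2=\sum_{n\in\ZZ+1/2}q^{n^2}$ in the \emph{same} nome $q=e^{\pi i z}$ used for $\theta_{L_C}$; hence $S_0(q)=\theta_3(q^2)$ and $S_1(q)=\theta_2(q^2)$, and what your computation actually establishes is $\theta_{L_C}(q)=w_C\bigl(\theta_3(q^2),\theta_2(q^2)\bigr)$ --- Ebeling's original formulation, in which the theta constants are evaluated at $2\tau$. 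Your own proposed consistency test, had you run it, exposes this: for the zero code $C=\{0\}\subset\FF_2^{\,1}$ one has $L_C=\sqrt{2}\,\ZZ$, so $\theta_{L_C}(q)=\sum_{m\in\ZZ}q^{2m^2}=1+2q^2+\cdots$, while $w_C(\theta_3,\theta_2)=\theta_3(q)=1+2q+\cdots$; the two already disagree at $q^1$ (your rank-one test $C=\FF_2$ shows the same discrepancy). So the factor-of-two bookkeeping you flagged as ``the hard part'' cannot be made to come out as the identity is literally printed; the statement holds only after the substitution $q\mapsto q^2$ in $\theta_3,\theta_2$, a convention slip in the paper's transcription of Ebeling (which also affects its displayed $\theta_{E_8^2}=w_{e_8^2}(\theta_3,\theta_2)$ example). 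Asserting the match ``under the normalization fixed by the definitions'' without checking it is the one genuine hole in an otherwise correct and standard argument.
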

We explain Milnor's example of Problem \ref{prob:theta} \cite{SPLAG}. 
There exist non-isomorphic doubly even self-dual codes of length $16$, $e_8^2$ and $d_{16}^+$ \cite{SPLAG}. 
Then, according to section \ref{sec:codes}, 
their weight enumerators are the same:
\[
w_{e_8^2}(x,y)=w_{d_{16}^+}(x,y)=P_8^2=(x^8+14x^4y^4+y^8)^2.
\]
Moreover, 
we obtain the non-isomorphic unimodular lattices of rank $16$, 
$E_8^2=L_{e_8^2}$ and $D_{16}^+=L_{d_{16}^+}$,
with the same theta series: 
\begin{align*}
\theta_{E_8}(q)&=\theta_{D_{16}^+}(q)\\
&=w_{e_8^2}(\theta_3,\theta_2)=w_{d_{16}^+}(\theta_3,\theta_2)
=(\theta_{3}^8+14\theta_{3}^4\theta_{2}^4+\theta_{2}^8)^2=E_4(q).
\end{align*}
Therefore, this example arises from the following concept:
\begin{center}
Codes $\longrightarrow$ Lattices.
\end{center}
The main idea of the proof of Theorem \ref{thm:main3} is 
that we add ``Matriods'' to the above diagram:
\begin{center}
Matroids $\longrightarrow $ Codes $\longrightarrow$ Lattices.
\end{center}
Thus, first, we have non-isomorphic graphic matroids with the 
same Tutte polynomials, 
following which we obtain non-isomorphic codes and lattices, which 
have the same theta series. 
We explain this in further detail in the following section. 
\subsection{Proof of Theorem \ref{thm:main3}}

In this section, we demonstrate the proof of Theorem \ref{thm:main3}. 

\begin{proof}[Proof of Theorem \ref{thm:main3}]

%For $m\in \ZZ_{>0}$, 
%let $G_1(m)$ and $G_2(m)$ be the graphs in Figure \ref{fig:graphs} \cite{BPR}. 

Let $G_1$ and $G_2$ be the graphs in Figure \ref{fig:graphs} \cite{BPR}. 

\begin{figure}[h]
\begin{tabular}{cc}
\hspace{-25pt}
\includegraphics[width=10cm,clip]{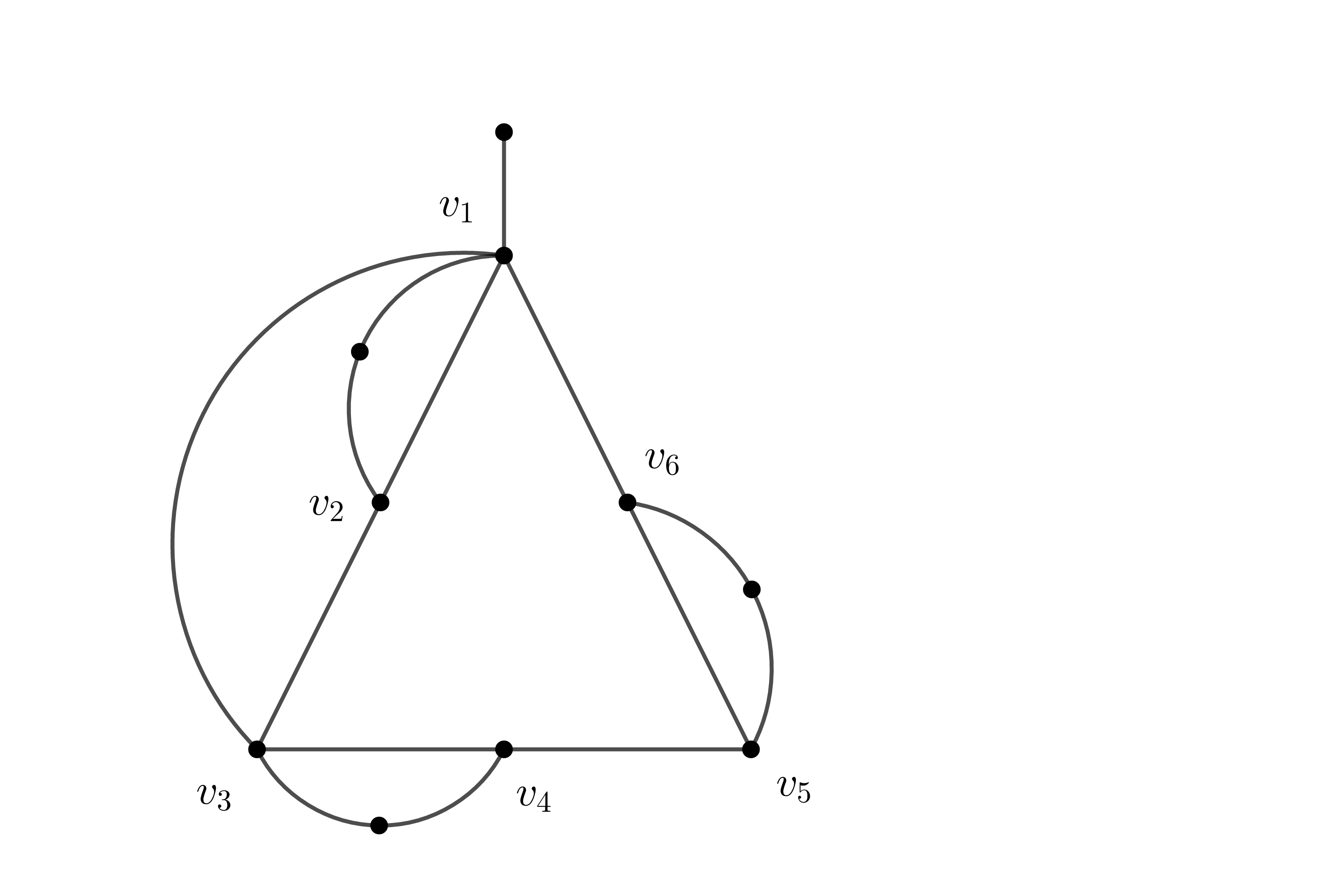}
&
\hspace{-125pt}
\includegraphics[width=10cm,clip]{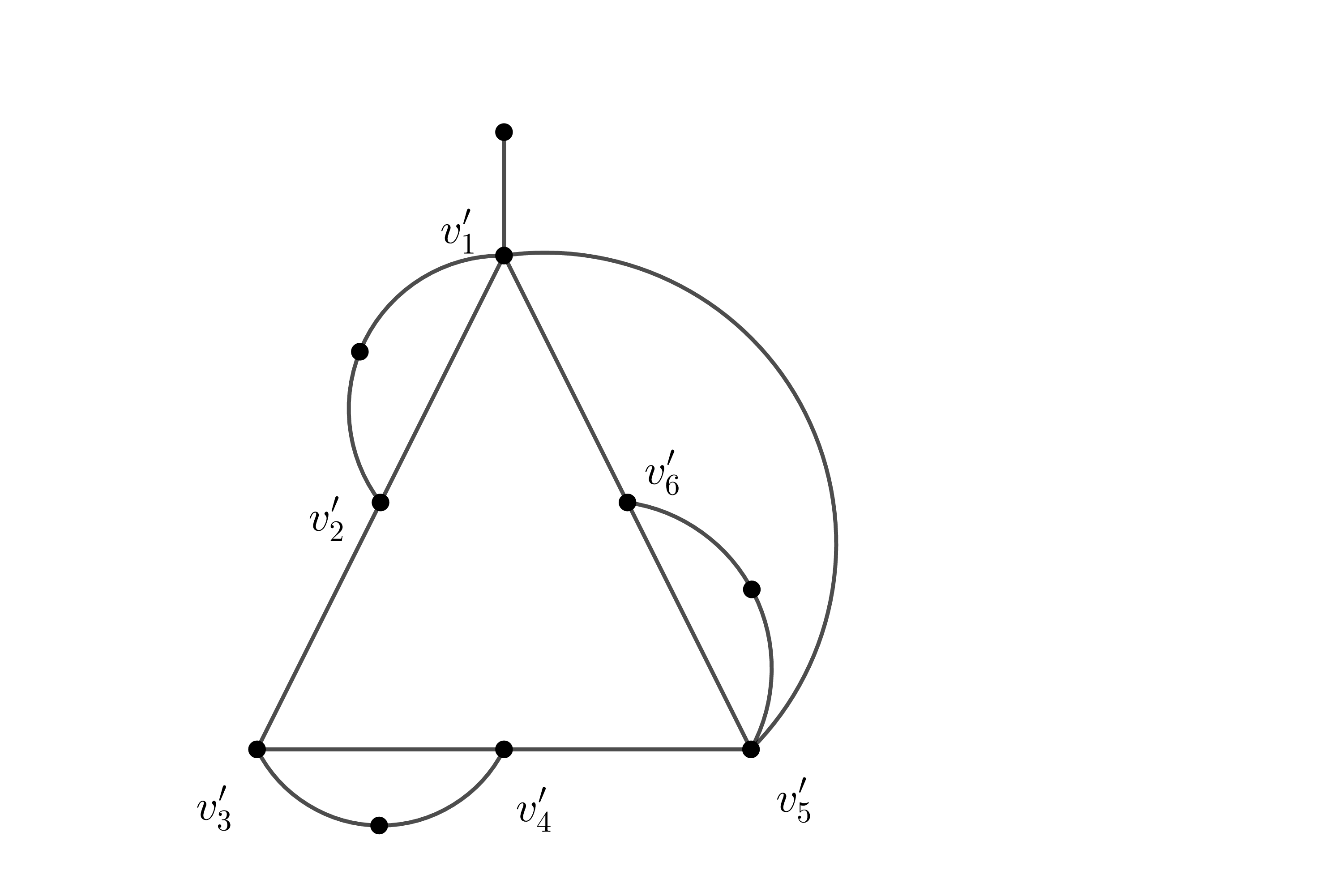}
\end{tabular}
\vspace{-20pt}
\caption{$G_1$ and $G_2$}\label{fig:graphs}
\end{figure}

Furthermore, for $i\in \{1,2\}$ and $n\in \ZZ_{\geq 0}$, 
let $G_i(n)=G_i \ast P_n$ be the joining of the graphs 
$G_i(n)$ and $P_n$, where $P_n$ indicates the path graph 
with $n$ edges. 
Note that the joining $G = G_1 \ast G_2$ of 
graphs $G_1=(V_1,E_1)$ and $G_2=(V_2,E_2)$ 
is the graph union of the 
$G_1$ and $G_2$ together with all 
edges joining $V_1$ and $V_2$. 

For $i\in\{1,2\}$ and $m\in \ZZ_{\geq 0}$, 
let $G_i(m,n)$ be the graphs 
with $m$ times edge subdivisions with respect to the edges:
\[
v_1v_2,
v_3v_4,
v_5v_6,
v'_1v'_2,
v'_3v'_4,
v'_5v'_6.
\]
We note that, for $i\in \{1,2\}$, the number of 
edges of $G_i(m,n)$ is $3m+11n+24$. 

It was demonstrated in \cite{BPR} that 
$G_1(m,n)$ and $G_2(m,n)$ are non-isomorphic graphic matroids
with the same Tutte polynomials. 
%Recall that $G_1(m,n)$ and $G_2(m,n)$ are non-isomorphic matroids 
%having the same Tutte polynomial. 
For $i\in \{1,2\}$, 
let $M_i$ be the vector matroid with respect to 
the incident matrix of $G_i(m,n)$. 
Then, according to Fact \ref{fact:matroids} and Theorem \ref{thm:Greene}, 
we can obtain the non-isomorphic codes 
$C_{M_1}$ and $C_{M_2}$ with length $3m+11n+24$ 
that have the same weight enumerator. 

Let $\varphi:\FF_2\rightarrow \FF_2^4$ be a map such that 
\[
0\mapsto 0000, 1\mapsto 1111.
\]
For $i\in \{1,2\}$, 
we define a new code with length $4(3m+11n+24)$: 
\[
\widetilde{C}_{M_i}:=
\{(\varphi(c_i))\mid (c_i)\in {C}_{M_i}\}. 
\]
We note that $\widetilde{C}_{M_i}$ is doubly even. 
According to \cite{{KKM},{Shima}} and Theorem \ref{thm:Ebeling}, 
we can obtain the non-isomorphic lattices 
$L_{\widetilde{C}_{M_1}}$ and $L_{\widetilde{C}_{M_2}}$ with the 
same theta series. 

The rank of $L_{\widetilde{C}_{M_1}}$ and $L_{\widetilde{C}_{M_2}}$ is 
$4(3m+11n+24)$. 
We remark that 
\[
3m+11n+24
\]
represents the numbers 
\[
\{24, 27, 30, 33, 35, 36, 38, 39, 41, 42\}\cup \{i\in \ZZ\mid i\geq 44\}. 
\]
This completes the proof of Theorem \ref{thm:main3}. 
\end{proof}

\section{Concluding remarks}\label{sec:rem}
We provide the following remarks: 

\begin{enumerate}
\item 
According to the proof of Theorem \ref{thm:main2}, 
the pseudo $3m$-state polynomials recover the 
graph structure with a number of edges less than or equal to $m$. 
It is natural to ask whether 
there exists a function $f(n)$ on $n\in \NN$ such that 
the polynomials with $f(n)$ variables recover the 
graph structure with a number of vertices less than 
or equal to $n$. 

\item 
Problem: 
For $n\in \NN$, 
%we determine the set of graphs $\mathcal{G}_n$ such that 
determine the set of graphs $\mathcal{G}_n$ such that 
the pseudo $n$-state polynomials determine 
any graph $G\in \mathcal{G}_n$. 

\item 

Problem: 
In \cite{{MOSS1},{MOSS2}}, we defined a complete invariant for 
matroids. 
For a graphic matroid, determine whether or not
%For a graphic matroid, we determine whether or not
this invariant is a special value of $n$-state polynomials. 
%\item 

%In the present paper, 

%\item 

%The concept of the $\ZZ_k$-codes were 
%defined in \cite{BDHO}. It is natural to ask that 
%is there a generalization of matroids over a finite ring?

\item 
In the proof of Theorem \ref{thm:main3}, 
we demonstrated that 
$L_{\widetilde{C}_{M_1}}$ and $L_{\widetilde{C}_{M_2}}$ 
are non-isomorphic with rank $4d$, where
\[
d\in \{24, 27, 30, 33, 35, 36, 38, 39, 41, 42\}\cup \{i\in \ZZ\mid i\geq 44\}. 
\]
However, for 
\[
d\in \{24, 27, 30, 33, 35\},
\]
we showed, with the aid of {\sc Magma}, that 
$L_{{C}_{M_1}}$ and $L_{{C}_{M_2}}$ 
are non-isomorphic. Therefore, 
we have the following conjecture:
\begin{conj}\label{conj:theta}
%Let $d\in 
%\{24, 27, 30, 33, 35, 36, 38, 39, 41, 42\}\cup \{i\in \ZZ\mid i\geq 44\}$. 
%Then 
$L_{{C}_{M_1}}$ and $L_{{C}_{M_2}}$, as defined in the 
proof of Theorem \ref{thm:main3}, 
are non-isomorphic. 
\end{conj}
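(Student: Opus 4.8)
The plan is to realize the desired pairs of lattices through the chain
\begin{center}
Matroids $\longrightarrow$ Codes $\longrightarrow$ Lattices
\end{center}
set up in Section~\ref{sec:CL}, pushing non-isomorphic $T$-equivalent matroids forward to non-isomorphic lattices with equal theta series. First I would fix a pair of non-isomorphic graphic matroids $M_1,M_2$ sharing the same Tutte polynomial; the existence of inequivalent but $T$-equivalent matroids is recorded in Section~\ref{sec:pre}. To obtain the entire admissible set of ranks rather than a single example, I would begin from one such pair of graphs and apply rank-controlling operations---subdivision of a fixed family of edges and the graph join with a path $P_n$---that preserve both $T$-equivalence and non-isomorphism while tuning the number of edges. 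The target is to arrange the edge count to equal $3m+11n+24$ for parameters $m,n\in\ZZ_{\geq 0}$, so that after the lattice construction the rank becomes $4(3m+11n+24)$.

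Next I would transport $T$-equivalence across the two bridges. By Fact~\ref{fact:matroids} each $M_i$ is the binary vector matroid of an incidence matrix, so its row space is a binary code $C_{M_i}$, and Theorem~\ref{thm:Greene} writes the weight enumerator $w_{C_{M_i}}$ as an explicit substitution into $T(M_i)$; hence equal Tutte polynomials force $w_{C_{M_1}}=w_{C_{M_2}}$. To reach theta series I would first make the codes doubly even by applying the repetition map $\varphi\colon\FF_2\to\FF_2^4$, $0\mapsto 0000$, $1\mapsto 1111$, producing codes $\widetilde{C}_{M_i}$ of length $4(3m+11n+24)$ whose weight enumerators remain equal, since every coordinate weight is simply quadrupled. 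Finally, Construction~A together with Theorem~\ref{thm:Ebeling} yields lattices $L_{\widetilde{C}_{M_i}}$ of rank $4(3m+11n+24)$ with $\theta_{L_{\widetilde{C}_{M_1}}}=\theta_{L_{\widetilde{C}_{M_2}}}$.

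The hard part will be the last mile. Equality of theta series is automatic once the weight enumerators agree, but I must still guarantee that the two lattices are genuinely non-isomorphic, and non-isomorphism descends neither from matroids nor from codes for free. The descent I would argue is the chain $M_1\not\cong M_2\Rightarrow C_{M_1}\not\cong C_{M_2}\Rightarrow \widetilde{C}_{M_1}\not\cong \widetilde{C}_{M_2}\Rightarrow L_{\widetilde{C}_{M_1}}\not\cong L_{\widetilde{C}_{M_2}}$: the first step uses that equivalent binary codes give isomorphic matroids, the second that $\varphi$ is injective on equivalence classes, and the third a criterion (as in \cite{KKM,Shima}) that Construction~A lattices built from inequivalent doubly even codes are themselves inequivalent; failing a clean general statement I would fall back on distinguishing the lattices directly via their automorphism groups or minimal-vector configurations. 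A secondary, purely arithmetic obstacle is to check that the shifted numerical semigroup $\{3m+11n+24 : m,n\in\ZZ_{\geq 0}\}$ equals $\{24,27,30,33,35,36,38,39,41,42\}\cup\{i\in\ZZ\mid i\geq 44\}$; this I would dispatch by a Frobenius-type analysis of the semigroup generated by $3$ and $11$ (its Frobenius number is $19$, so all residues are attained from $20$ onward), checking the finitely many small exceptional values by hand.
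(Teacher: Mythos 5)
Your proposal does not address the statement in question. Conjecture \ref{conj:theta} concerns the lattices $L_{C_{M_1}}$ and $L_{C_{M_2}}$ obtained by applying Construction~A \emph{directly} to the binary codes $C_{M_i}$, hence lattices of rank $d=3m+11n+24$; your entire argument --- the doubly-even-ization via $\varphi$, the appeal to \cite{KKM,Shima}, and the final implication $\widetilde{C}_{M_1}\not\cong \widetilde{C}_{M_2}\Rightarrow L_{\widetilde{C}_{M_1}}\not\cong L_{\widetilde{C}_{M_2}}$ --- concerns the quadrupled codes $\widetilde{C}_{M_i}$ and the rank-$4d$ lattices $L_{\widetilde{C}_{M_i}}$. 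That is exactly the content of Theorem \ref{thm:main3}, which the paper has already proved; reproving it says nothing about the conjecture. The entire difficulty of the conjecture lies in what happens when $\varphi$ is \emph{omitted}: the codes $C_{M_i}$ (row spaces of incidence matrices, i.e., cut spaces of the graphs $G_i(m,n)$) are neither doubly even nor self-dual, so the criterion of \cite{KKM,Shima} that transfers inequivalence of codes to non-isomorphism of Construction-A lattices is unavailable. Your chain of implications therefore breaks precisely at its last link when applied to the lattices the conjecture is actually about, and the theta-series equality (which does hold for $L_{C_{M_i}}$ by Theorems \ref{thm:Greene} and \ref{thm:Ebeling}) is the easy half.

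Note also that the paper does not prove this statement: it is posed as a conjecture, supported only by {\sc Magma} computations verifying that $L_{C_{M_1}}\not\cong L_{C_{M_2}}$ for $d\in\{24,27,30,33,35\}$. Your fallback plan of ``distinguishing the lattices directly via their automorphism groups or minimal-vector configurations'' is in effect what the paper already did, but such a computation can only handle finitely many ranks and cannot settle the conjecture for all $d$ in the stated infinite set. A genuine proof would require a new ingredient --- for instance, a structural theorem showing that inequivalent (not necessarily doubly even) binary codes of this type yield non-isomorphic Construction-A lattices, or an invariant, uniform in $m$ and $n$, that separates $L_{C_{M_1}}$ from $L_{C_{M_2}}$.
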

If Conjecture \ref{conj:theta} is true, 

we obtain examples of Problem \ref{prob:theta} for 
rank 
\[d\in 
\{24, 27, 30, 33, 35, 36, 38, 39, 41, 42\}\cup \{i\in \ZZ\mid i\geq 44\}. 
\]
\item 
The Tutte polynomials of genus $g$ were defined in \cite{MOSS1} and 
we discuss their properties in a forthcoming paper \cite{MOSS2}. 
According to Theorem \ref{thm:Greene}, 
there is a correspondence between 
the Tutte polynomials and weight enumerators. 

Thus, it is also natural to ask whether 
there is a correspondence 
between 
the Tutte polynomials and weight enumerators
of genus $g$. 

It is well known that 
the weight enumerators 
of genus $g$ yield the 
Siegel theta series. 
Therefore, 
if such a correspondence exists, 
we may obtain examples whereby the 
non-isomorphic lattices have the 
same Siegel theta series. 

\item 

Using the concept of this study 
and the following diagram:
\begin{center}
Matroids $\longrightarrow $ Codes $\longrightarrow$ Lattices $\longrightarrow$ 
Vertex operator (super) algebras, 
\end{center}
we may obtain the non-isomorphic vertex operator (super) algebras 
with the same trace function. 

\end{enumerate}

\section*{Acknowledgments}

%It is a great honor to have the opportunity 
%to publish in the 
%issue 
%celebrating the 75th birthday 
%of Prof.~Eiichi Bannai and Prof.~Hikoe Enomoto. 
%The second named author was Prof.~Bannai's last PhD student 
%at Kyushu University. 
%When he was a graduate school student, 
%he had the impression that Prof.~Bannai 
%believed that interesting and significant mathematics
%arise at the intersection of several different mathematical areas. 
%He would like to keep this in mind as he continues his studies in mathematics.
The authors thank Prof.~Akihiro Munemasa and Prof.~Manabu Oura 
for their helpful discussions 
on this research. 
The authors would also like to thank the anonymous
reviewers for their beneficial comments on 
an earlier version of the manuscript. 
This work was supported by JSPS KAKENHI (18K03217, 18K03388).

%%%%%%%%%%%%%%%%%  References  %%%%%%%%%%%%%%%%%%%%%%%%

\end{document}